\documentclass{amsart}
\usepackage{amssymb}
\usepackage{amsthm}
\usepackage[dvipsnames]{xcolor}
\usepackage{verbatim}
\usepackage{graphicx}
\vfuzz2pt \hfuzz2pt
\newtheorem{thm}{Theorem}[section]
\newtheorem{cor}[thm]{Corollary}

\newtheorem{exam}[thm]{Example}
\theoremstyle{definition}
\theoremstyle{remark}
\newtheorem{rem}[thm]{Remark}
\numberwithin{equation}{section}

\begin{document}

\title[$J$-class  weighted translations on L. C. G. ]
{$J$-class weighted translations
 on locally compact groups}

\author{\bf  M. R. Azimi$^{*^1}$,   I.
Akbarbaglu$^2$ and A. R. Imanzadeh Fard$^3$ }

\address {M. R. Azimi}
\email{mhr.azimi@maragheh.ac.ir(for corresponding)}

\address{I. Akbarbaglu}
\email{i.akbarbaglu@cfu.ac.ir}

\address{A. R. Imanzadeh Fard}
\email{a.imanzadeh98@gmail.com}

\address{$1, 3$: Department of Mathematics, Faculty of Sciences,
University of Maragheh, P.O. Box: 55181-83111, Golshahr, Maragheh,
Iran}

\address{ $^{2}$ Department of Mathematics Education,
Farhangian University, P.O. Box: 14665-889, Tehran, Iran}

\thanks{$^*$Corresponding author}

\subjclass[2010]{Primary 47A16; Secondary 47B38.}

\keywords{Extended limit set, $J$-class, Orbit, Hypercyclic,
Weighted translation, Locally compact group, Haar measure.}

\date{}

\dedicatory{}

\commby{}


\begin{abstract}
A bounded linear operator $T$ on a Banach space $X$ (not necessarily
separable) is said to be $J$-class operator whenever the extended
limit set, say $J_T(x)$ equals $X$ for some vector $x\in X$.
Practically, the extended limit sets localize the dynamical behavior
of operators.
 In this paper, using the extended limit sets we will examine the necessary
 and sufficient conditions for the  weighted translation $T_{a,\omega}$
  to be $J$-class  on a locally compact group $G$,
 within the setting of $ L^p$-spaces for $ 1 \leq p < \infty $.
 Precisely, we delineate the boundary between $J$-class and
hypercyclic behavior for weighted translations. Then, we will show
that for  torsion elements in  locally compact
 groups,  unlike the case of  non-dense orbits of  weighted
translations, we have $J_{T_{a,\omega}}(0)=L^p(G)$.
 Finally, we will provide some examples on which the  weighted translation
$ T_{a,\omega}$ is $J$-class but it fails to be hypercyclic.

\end{abstract}
\maketitle
\section{\textbf{Introduction}}
Let's assume that $ G $ is a locally compact group with an identity
element $ e $ and equipped with the right Haar measure $\lambda$.
 An element $ g \in G $ is called a torsion element if there exists
 $ n \in \mathbb{N} $ such that $ g^n = e $ and called
 periodic if the closed subgroup generated by $g$, is compact. It is clear
 that every element of torsion (finite order) is also periodic, and we
 call that $ g $ aperiodic if it is not periodic.
We say that an element $a\in G$ passes through  compact subsets,
 whenever for each compact subset $K\subset G$,  there exists an $N \in
\mathbb{N}$ such that $K \cap Ka^{\pm m}= \emptyset$, for each  $ m
\geq N $. Apparently, by \cite[Lemma 2.1]{chu},   every  aperiodic
element of the second countable groups has this property.

 For an arbitrary element
 $g \in G $, the unit point mass  measure (or Dirac measure) at $g$, denoted $\delta_g$,   for any
  subset $A \subset G $ is defined by  $ \delta_g(A) = 1 $ if
 $ g \in A $ and $ \delta_g(A) = 0$, otherwise.
 The Banach space $ L^p(G) $ (for $ 1 \leq p < \infty $) consists
of all Borel functions $ f: G \rightarrow \mathbb{C}$ such that:
\[
\|f\|_p = \left( \int_G |f(x)|^p \, d\lambda(x)
\right)^{\frac{1}{p}} < \infty.
\]
Let a continuous function $ \omega: G \to (0, \infty) $ be a
\emph{weight} on a group $ G $. Given an element $ a \in G $, we
define  typically the \emph{weighted translation operator} $
T_{a,\omega}: L^p(G) \to L^p(G) $ for $ 1 \leq p < \infty $ by the
assignment:
 $$ f\mapsto \omega f\ast\delta_a,\qquad\forall f\in L^p(G),$$

where, for every $ x \in G$, the convolution:
\begin{eqnarray*}
(f\ast\delta_a)(x)=\int_G\,\,f(xy^{-1})\,d\delta_a(y)=f(xa^{-1})
\end{eqnarray*}
is interpreted as  the right translation of $f$ by $a^{-1}$. For
each Borel-measurable subset $A$, the \emph{change of variable
formula},  $$\int_{A} f*\delta_a(x)d\lambda(x)=
  \int_{Aa^{-1}} f(x) d\lambda(x),$$ will be used
  frequently throughout this paper. For
each $m\in \mathbb{N}$, the iterates of the weighted translation
 $T_{a,\omega}$ is obtained as follows:
\begin{eqnarray*}
T^m_{a,\omega}(f)(x)=[\prod_{i=0}^{i=m-1}\omega(xa^{-i})]f(xa^{-m})
\end{eqnarray*}
For simplicity of the notation, we adopt that:
 $${\tilde{\omega}_{m}}(x)=(\prod_{i=0}^{i=m-1}\omega(xa^{-i}))^{-1},
 \qquad \omega_{m}(x)=\prod_{i=1}^{i=m}\omega(xa^{i}).$$
 So
\begin{eqnarray*}
T^m_{a,\omega}(f)(x)={\tilde{\omega}_{m}}^{-1}(x)f(xa^{-m}).
\end{eqnarray*}

Let $X$ be a separable Banach space. A bounded linear operator $T:
X\rightarrow X$ is said to be hypercyclic whenever there exists a
vector $x\in X$ such that its orbit, $$orb(T,x)=\{T^nx:
n=0,1,2,\cdot\cdot\cdot\}$$ is dense in $X$. Concerning dynamics of
linear operators, the interested readers are referred to the books
\cite{baya, gros}.  In the year 1995, H. Salas was the first
mathematician who has initiated the hypercyclic weighted shifts in
terms of their weight sequences in \cite{sal}. In \cite{chen},  C.
Chen and C-H. Chu have studied the hypercyclicity of the weighted
translation $T_{a, \omega}$ on locally compact groups for various
cases of the element $a$. They provided a necessary and sufficient
conditions for the hypercyclicity of $T_{a, \omega}$ when $a$ is an
aperiodic element. Afterwards,   G. Costakis and  A. Manoussos have
localized the concept of hypercyclicity in \cite{cos}. For this,
they introduced the limit sets and the extended limit sets. Given a
bounded linear operator $T$ on a Banach space $X$ (not necessarily
separable) and $x\in X$, the \emph{limit set} is defined as follows:
$$L_T(x)=\{y\in X | ~ \exists (n_k) \subseteq \mathbb{N},~
\text{s.t.} T^{n_k} x \to y \}.$$
 Note that $ T $ is hypercyclic if there exists a vector
$ x \in X $ such that $L_T(x)=X$.  Additionally, we denote the
\emph{extended limit set} by $ J_T(x) $. It
 consists of all vectors $y$ in $X$ on which there exist a strictly increasing
  sequence of positive integers ${n_k}$ and a sequence $(x_n)\subseteq X$ such that
   $x_n \to x$  and $T^{n_k} x_n \to y$.  Shortly,
  \[
   J_T(x) = \{ y \in X :\exists \upuparrows(n_k) \subseteq \mathbb{N},   \exists (x_n)
   \subset X ~ \text{s.t. }~ ~ x_n \to x \text{ and } T^{n_k} x_n \to y \}
   .\]
An operator $ T $ is said to be  \textit{J-class } whenever $ J_T(x)
= X $ for some vector $x\in X$. In this case, a vector $ x $ is
called a \textit{J-vector} for $ T $. For all $x\in X$, the sets $
L_T(x)$  $ J_T(x)$ are  closed and $T$-invariant. In the case, an
operator $T$ is power bounded i.e., for each $n\in \mathbb{N}$,
$\|T^n\|\leq M$ (for some real number) then $ J_T(x)=L_T(x)$
(\cite{man}).

 It is worth noting that if an
operator $T$ is hypercyclic then $ J_T(x) = X $ for each vector
$x\in X$. Further,  an equivalent definition of $J_{T}(x)$ can be
stated by the way that $J_{T}(x)$ consists of those $y\in X$ such
that for every pair of neighborhoods $ U, V $ of $ x, y$
respectively and for each positive integer $N$, there exists a
positive integer $ n> N$ such that $$T^n U \cap V\neq \emptyset.$$
Note that on a separable Banach space $X$, an operator $T$ is
topologically transitive if and only if $J_{T}(x)=X$ for some cyclic
vector $x\in X$, see \cite{man} for more details. In \cite{cos}, G.
Costakis and A. Manoussos have characterized the $J$-class
($J^{mix}$- class) unilateral weighted shifts on
$\ell^{\infty}(\mathbb{N})$ in terms of their weight sequences
through the use of the extended limit sets. Also they proved that a
bilateral weighted shift on $\ell^{\infty}(\mathbb{Z})$ cannot be a
$J$-class. The existence of the $J$-class operators on Banach spaces
has been studied in \cite{nas2}. Some other works like as \cite{as,
az3, azmu, mar} are devoted to the study on $J$-class operators as
well.

Throughout this paper, the interior and the closure of any subset
$A$ will be denoted by $A^\circ$ and $\overline{A}$, respectively.
By $\sigma(f)$ and $ess\,sup f$ we mean the support of any function
$f\in L^p(G)$ i.e., $\{x\in G: \, f(x)\neq 0\}$ and its essential
supremum norm, respectively. The set of all continuous functions on
$G$ with compact supports will be denoted by $C_c(G)$ as well.

In this paper, first we will show that for the torsion elements of
the locally compact
 groups,  weighted translations  cannot be  $J$-class operators.
Then by imposing special conditions on the weight function, we aim
to examine the $J$-class weighted translations $ T_{a,\omega}$ on
$L^p(G)$. Moreover, we will provide some examples on which
$T_{a,\omega}$ is $J$-class but it fails to be hypercyclic.

\section{\textbf{$J$-class  weighted translation operators on  $ L^p(G) $}}
 In this section,  we shall provide the necessary and sufficient conditions for
 weighted translation operators to  possess nonzero $J$-vectors
  in the case of non torsion elements. Furthermore,
 we show that for the torsion elements of  locally compact
 groups, weighted translation operators unlike hypercyclicity
 (\cite[Lemma 1.1]{chu}), the extended limit set  $J_{T_{a,\omega}}(0)$
  can fill whole the space $L^p(G)$.


\begin{thm}\label{t2.2}
Let $G$ be a locally compact group, $a\in G$ be an element passing
through  each compact subsets and $\omega:G\to (0,\infty)$ be a
weight function. If the operator $T_{a,\omega}$   is $J$-class then
for each compact subset $\Delta \subset G$ with the positive
measure, there is a sequence of subsets $(E_k)$ such that
$\lambda(E_k)\to\lambda(\Delta)$. Moreover, for some subsequence
$(n_k)$,
$$
\lim_{k\rightarrow \infty}\underset{x\in E_{k}}{ess\,sup}\,
 \tilde{\omega}_{n_k}(x)=0.
$$
\end{thm}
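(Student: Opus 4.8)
The plan is to exploit the assumption that $T_{a,\omega}$ is $J$-class to extract, from the definition of the extended limit set at a $J$-vector, the approximation-of-indicator functions and the decay of the weight products. First I would fix a $J$-vector $f_0$ for $T_{a,\omega}$, so $J_{T_{a,\omega}}(f_0)=L^p(G)$, and fix a compact $\Delta\subset G$ with $\lambda(\Delta)>0$; the target vector will be the indicator $\chi_\Delta\in L^p(G)$. Since $\chi_\Delta\in J_{T_{a,\omega}}(f_0)$, there exist a strictly increasing sequence $(n_k)$ and functions $g_k\to f_0$ in $L^p(G)$ with $T^{n_k}_{a,\omega}g_k\to\chi_\Delta$. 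Using the formula $T^{m}_{a,\omega}g(x)=\tilde\omega_m^{-1}(x)g(xa^{-m})$ recorded in the introduction, I would analyze $T^{n_k}_{a,\omega}g_k$ restricted to $\Delta$: on $\Delta$ this function must be $L^p$-close to $1$, while the "mass" that $g_k$ contributes there comes from the translated region $\Delta a^{-n_k}$.

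The key step is to use that $a$ passes through compact subsets: choose a compact set $K$ containing $\Delta$ together with (a neighborhood of) $\sigma(f_0)$ up to small $L^p$-error, so that for all large $k$ one has $K\cap Ka^{n_k}=\emptyset$, hence $\Delta a^{-n_k}$ is eventually disjoint from $K$ and in particular essentially disjoint from the bulk of $\sigma(f_0)$. Because $g_k\to f_0$, the portion of $g_k$ living on $\Delta a^{-n_k}$ has $L^p$-norm tending to $0$. Now define
$$
E_k=\set{x\in\Delta : \abs{(T^{n_k}_{a,\omega}g_k)(x)}>1/2},
$$
or a similar level set; from $T^{n_k}_{a,\omega}g_k\to\chi_\Delta$ in $L^p$ one gets $\lambda(\Delta\setminus E_k)\to 0$, i.e. $\lambda(E_k)\to\lambda(\Delta)$, which is the first assertion. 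On $E_k$ we then have
$$
\tfrac12<\abs{(T^{n_k}_{a,\omega}g_k)(x)}=\tilde\omega_{n_k}(x)\,\abs{g_k(xa^{-n_k})},
$$
so $\tilde\omega_{n_k}(x)>\tfrac12\,\abs{g_k(xa^{-n_k})}^{-1}$ — this is the wrong direction, so instead I would integrate: $\int_{E_k}\tilde\omega_{n_k}(x)^p\abs{g_k(xa^{-n_k})}^p\,d\lambda\geq (1/2)^p\lambda(E_k)$, combined with a change of variable $\int_{E_k a^{-n_k}}\abs{g_k(y)}^p(\cdots)$ to see that $g_k$ cannot be small on that set unless $\tilde\omega_{n_k}$ compensates.

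To extract the uniform (essential-sup) decay of $\tilde\omega_{n_k}$ on $E_k$, I would refine the choice of $E_k$: shrink it slightly to a further subset (still with $\lambda\to\lambda(\Delta)$) on which simultaneously $T^{n_k}_{a,\omega}g_k$ is close to $1$ and $g_k$ is close to $f_0$ under the translation; since $E_ka^{-n_k}$ escapes every compact set and $f_0\in L^p$, the essential supremum of $\abs{g_k}$ on $E_ka^{-n_k}$ can be forced to $0$ (here one uses that an $L^p$ function has arbitrarily small essential-sup tails outside large compact sets, at least after passing to a subsequence and discarding a small-measure set, possibly invoking $C_c(G)$ density). Then from $\tilde\omega_{n_k}(x)\abs{g_k(xa^{-n_k})}$ being bounded below in the relevant averaged sense while $\abs{g_k(xa^{-n_k})}\to 0$ uniformly, one concludes $\operatorname{ess\,sup}_{x\in E_k}\tilde\omega_{n_k}(x)\to 0$.

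The main obstacle I anticipate is the passage from the $L^p$-convergence statements (which are only "in mean") to the uniform essential-supremum conclusion on $E_k$: one must carefully interleave three shrinkings of the set $\Delta$ — removing where $T^{n_k}_{a,\omega}g_k$ is far from $1$, removing where $g_k$ (pulled back by $a^{-n_k}$) is not essential-sup small, and keeping $\lambda(E_k)\to\lambda(\Delta)$ — and argue that these exceptional sets all have measure tending to $0$. Handling the weight factor $\tilde\omega_{n_k}$, which is only continuous and positive with no a priori bounds, requires that all the decay come from the dynamics rather than from $\omega$ itself, so the escape-to-infinity of $E_ka^{-n_k}$ (guaranteed by "$a$ passes through compact subsets") is the crucial structural input that makes the uniform estimate possible.
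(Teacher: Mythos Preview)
Your overall plan coincides with the paper's proof: take a $J$-vector $f_0$, approximate $\chi_\Delta$ via $g_k\to f_0$ with $T^{n_k}_{a,\omega}g_k\to\chi_\Delta$, carve out from $\Delta$ a level set $E_k$ on which $T^{n_k}_{a,\omega}g_k$ is near $1$, and bound $\tilde\omega_{n_k}$ there by $|g_k(xa^{-n_k})|$, which is then shown to be uniformly small because $xa^{-n_k}$ has left the ``bulk'' of $\sigma(f_0)$. The paper removes three small sets exactly as you outline (where $T^{n_k}g_k$ is far from $1$, where $g_k$ is far from $f_0$ after the shift, and where $|f_0|$ is still large after the shift), and the ``passes through compact subsets'' hypothesis is used precisely to guarantee $\Delta a^{-n_k}$ misses a fixed compact set.

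There is one genuine slip and one imprecise step you should fix. The slip: you quote $T^m g(x)=\tilde\omega_m(x)^{-1}g(xa^{-m})$ correctly but then drop the inverse, writing $|T^{n_k}g_k(x)|=\tilde\omega_{n_k}(x)|g_k(xa^{-n_k})|$. With the inverse in place, $|T^{n_k}g_k(x)|>1/2$ gives $\tilde\omega_{n_k}(x)<2|g_k(xa^{-n_k})|$, which is already the \emph{right} direction; the integration detour is unnecessary. The imprecise step: ``an $L^p$ function has arbitrarily small essential-sup tails outside large compact sets'' is false in general (think of $\sum_n \chi_{[n,\,n+n^{-2}]}$ on $\Real$). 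The correct device, which the paper uses, is to work with the level set $A=\{|f_0|>\eta\}$ (finite measure since $f_0\in L^p$), take a compact $B\subseteq A$ with $\lambda(A\setminus B)$ small, apply ``passes through compact subsets'' to $\Delta\cup B$ so that $xa^{-n_k}\notin B$ for $x\in\Delta$, and then discard the small set $(A\setminus B)a^{n_k}\cap\Delta$; on what remains $|f_0(xa^{-n_k})|\le\eta$ tautologically. Your alternative via $C_c(G)$-approximation of $f_0$ also works and leads to the same estimate.
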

\begin{proof}
Suppose that $T_{a,\omega}$ is a  $J$-class operator. Then there
exists a nonzero $f \in L^p(G)$ such that
$J_{T_{a,\omega}}(f)=L^p(G)$. Let $0<\epsilon<1$ and let $\Delta$ be
a compact subset with positive measure. Choose $0<\eta<1$ with
$4\eta^p<\lambda(\Delta)$ and $\frac{2\eta}{1-\eta}<\epsilon$ so
that the set $A=\{x\in\sigma(f):|f(x)|>\eta\}$ is of positive finite
measure. By regularity of Haar measure $\lambda$ we may find a
compact subset $B\subseteq A$ such that $\lambda(A\setminus
B)<\eta^p$. Also, pick $0<\delta<\eta$ and $n\in\mathbb{N}$ such
that $(\Delta\cup B)a^{-n}\cap(\Delta\cup B)=\emptyset$ and moreover
$\|f-g\|_p<\delta^2$ and
$\|T_{a,\omega}^n(g)-\chi_{\Delta}\|_p<\delta^2$. Define $C=\{x\in
G\setminus B: |f(x)-g(x)|\geq\delta\}$. Now either $\lambda(C)=0$ or
$\lambda(C)<\delta^p$, because of
$$
\delta^{2p}>\|f-g\|_{p}^p\geq\int_C|f(x)-g(x)|^p
d\lambda(x)\geq\delta^p\lambda(C).
$$
Also, set
$D=\{x\in\Delta:|\tilde{\omega}_n(x)^{-1}g(xa^{-n})-1|\geq\delta\}$.
In the case $D\neq \emptyset$, we have
$$
\delta^{2p}>\|T_{a,\omega}^n(g)-\chi_{\Delta}\|_{p}^p\geq\int_D|\tilde{\omega}_n(x)^{-1}g(xa^{-n})-1|^p
d\lambda(x)\geq\delta^p\lambda(D).
$$
Now, for $x\in E:=\Delta\setminus \big(D\cup Ca^n\cup(A\setminus
B)a^n\big)$ we have $\lambda(\Delta\setminus E)<3\eta^p$ and
furthermore
$$
\tilde{\omega}_n(x)<\frac{|g(xa^{-n})|}{1-\delta}
<\frac{|f(xa^{-n})|+\delta}{1-\delta}<\frac{2\eta}{1-\eta}<\epsilon.
$$
\end{proof}
\begin{rem}
It is worth noting that on the support of the nonzero $J$-vector
$f$, set $f=\lim_{m\to\infty}f\chi_{A_m}$ where $A_m=\{x\in
\sigma(f): |f(x)|\geq\frac{1}{m}\}$. Pick $g\in L^p(G)$ and
$n\in\mathbb{N}$ on which $\|f-g\|_p<\delta^2$ and
$\|T_{a,\omega}^{n}g\|<\delta^2$. Set $B_m=\{x\in A_m:
|f(x)-g(x)|\geq\delta\}$. Then similarly
$0\leq\lambda(B_m)<\delta^p$. Also, by implementing the change of
variable formula to the subset   $C_m=\{x\in A_m:
|\omega_n(x)g(x)|\geq\delta\}$ we get that
$0\leq\lambda(C_m)<\delta^p$. Finally, for every $x\in E_m:=
A_m\setminus(B_m\cup C_m)$ these are inferred that
$\lambda(A_m\setminus E_m)<2\delta^p$ and
$$
\omega_n(x)<\frac{\delta}{|g(x)|}
<\frac{\delta}{\frac{1}{m}-\delta}<\epsilon.
$$
\end{rem}
In the following theorem sufficient conditions for $J$-class
weighted translations on $L^p(G)$-spaces are characterized.
Moreover, it precisely delineates the boundary between $J$-class and
hypercyclic behavior for such operators.
\begin{thm}\label{t4}
Let $G$ be a locally compact group, $a\in G$ be an element passing
through  each compact subsets and $\omega:G\to (0,\infty)$ be a
weight function. If
\begin{itemize}
  \item [(i)]for each compact subset $\Delta \subset G$ with
the positive measure,  there is a sequence of Borel subsets $(E_k)$
such that $\lambda(E_k)\to\lambda(\Delta)$. Moreover, for some
subsequence $(n_k)$, $\lim_{k\rightarrow \infty}\underset{x\in
E_{k}}{ess\,sup}\,
 \tilde{\omega}_{n_k}(x)=0$;
  \item [(ii)] for some compact subset $K \subset G$ with
$\lambda(K)>0$,
 $$ \lim_{k\rightarrow \infty}\underset{x\in
K}{ess\,sup}\,
 {\omega}_{n_k}(x)=0.$$
\end{itemize}
Then the operator $T_{a,\omega}$ is $J$-class with the $J$-vector
$\chi_K$.
\end{thm}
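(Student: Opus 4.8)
The goal is to show that under conditions (i) and (ii), $\chi_K \in L^p(G)$ is a $J$-vector, i.e.\ $J_{T_{a,\omega}}(\chi_K) = L^p(G)$. The plan is to verify the neighborhood criterion: given an arbitrary $h \in L^p(G)$ and $\epsilon > 0$, I would produce a function $u \in L^p(G)$ with $\|u - \chi_K\|_p < \epsilon$ and an index $n$ (drawn from the subsequence $(n_k)$) with $\|T_{a,\omega}^n u - h\|_p < \epsilon$. Since $C_c(G)$ is dense in $L^p(G)$, I may assume $h \in C_c(G)$ with compact support $\Delta := \sigma(h)$ (enlarging to a compact set of positive measure if necessary). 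Then I would apply hypothesis (i) to this $\Delta$ to get Borel sets $E_k$ with $\lambda(E_k) \to \lambda(\Delta)$ and $\mathrm{ess\,sup}_{x \in E_k}\tilde\omega_{n_k}(x) \to 0$ along a further subsequence, and simultaneously use hypothesis (ii) on $K$.

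The construction splits $u$ into two pieces according to the two "directions" in which the translation acts. First I would define $u_1 := \chi_K$ itself on a region where forward iteration kills it: because $a$ passes through compact subsets, for $n = n_k$ large enough $Ka^{-n} \cap K = \emptyset$, and on $K$ the iterate is $T_{a,\omega}^n(\chi_K)(x) = \tilde\omega_n(x)^{-1}\chi_K(xa^{-n})$, whose $L^p$-norm over $K$ I would bound using (ii) — that is, $\|T_{a,\omega}^n \chi_K\|_p$ is small because $\omega_{n}$ is small on $K$ (after the change of variable, the mass of $T^n\chi_K$ lives on $Ka^{n}$ and is weighted by $\omega_{n_k}$ restricted to $K$). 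Second, to hit the target $h$ supported on $\Delta$, I would set $u_2 := \tilde\omega_{n_k}(\,\cdot\, a^{n_k})\, h(\,\cdot\, a^{n_k})\cdot \chi_{E_k a^{n_k}}$, so that $T_{a,\omega}^{n_k} u_2$ equals $h$ on $E_k$ and vanishes elsewhere; then $\|T_{a,\omega}^{n_k} u_2 - h\|_p^p = \int_{\Delta \setminus E_k}|h|^p \to 0$ since $\lambda(\Delta \setminus E_k) \to 0$ and $h$ is bounded. The crucial point is that $\|u_2\|_p$ is itself small: after the change of variable, $\|u_2\|_p^p = \int_{E_k}\tilde\omega_{n_k}(x)^p |h(xa^{-n_k} \cdot a^{n_k})|^p\,d\lambda$ — wait, more carefully $\|u_2\|_p^p = \int_{E_k}\tilde\omega_{n_k}(x)^p|h(x)|^p\,d\lambda(x) \leq (\mathrm{ess\,sup}_{E_k}\tilde\omega_{n_k})^p \|h\|_p^p \to 0$ by (i). So $u_2 \to 0$ in norm, which is what allows $u := \chi_K + u_2$ (after adjusting supports so the two pieces don't interfere) to stay close to $\chi_K$.

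The supports must be disjointified so that the two pieces don't collide after iteration: I would arrange, using that $a$ passes through compact subsets (so $(\Delta \cup K)a^{-n} \cap (\Delta \cup K) = \emptyset$ for $n = n_k$ large), that $\sigma(u_2) = E_k a^{n_k}$ is disjoint from $K$ and that $T^{n_k}_{a,\omega}\chi_K$ lives off $\Delta$, so that $T^{n_k}_{a,\omega} u = T^{n_k}_{a,\omega}\chi_K + T^{n_k}_{a,\omega} u_2$ agrees with $h$ up to the small error from $\Delta \setminus E_k$ plus the small contribution $\|T^{n_k}_{a,\omega}\chi_K\|_p$. Putting the three small quantities — $\|u_2\|_p$, $\|T^{n_k}_{a,\omega}\chi_K\|_p$, and $\|T^{n_k}_{a,\omega}u_2 - h\|_p$ — below $\epsilon$ for $k$ large finishes the verification. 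The main obstacle I anticipate is the bookkeeping of supports: one must choose $n_k$ large enough simultaneously for (a) the disjointness $(\Delta\cup K)a^{-n_k}\cap(\Delta\cup K)=\emptyset$, (b) the ess-sup decay in (i) restricted to $E_k$, and (c) the ess-sup decay in (ii) on $K$; since (i) and (ii) each only guarantee decay along \emph{some} subsequence, I would first pass to a common subsequence along which all three hold, and only then carry out the estimates.
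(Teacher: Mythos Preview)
Your proposal is correct and follows essentially the same approach as the paper: the paper defines $g_k(x) := \tilde{\omega}_{n_k}(xa^{n_k})\,f\chi_{E_k}(xa^{n_k}) + \chi_K(x)$, which is precisely your $u = \chi_K + u_2$, and then bounds $\|g_k - \chi_K\|_p$ and $\|T_{a,\omega}^{n_k}g_k - f\|_p$ via the same three small quantities you isolate. Your care with the disjointness $(\Delta\cup K)a^{-n_k}\cap(\Delta\cup K)=\emptyset$ and with extracting a common subsequence is in fact a bit more scrupulous than the paper, which tacitly treats $(n_k)$ as fixed for both (i) and (ii) and handles the cross term by the crude inequality $|a+b|^p\le 2^p(|a|^p+|b|^p)$ rather than by strict support separation.
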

\begin{proof}
Assume that the statements of $\mbox{(i)}$ and $\mbox{(ii)}$ hold. We shall prove
that $J_{T_{a,\omega}}(\chi_K)=L^p(G)$. Note that
$\overline{C_c(G)}=L^p(G)$.
 For an arbitrary $f\in C_c(G)$ it is known that
$\sigma(f)$ is a non-empty and compact subset of $G$. We are going to find a
sequence like as $(g_k)$ in $L^p(G)$ and a strictly increasing sequence of positive integers $(n_k)$ such that
$\|g_k -\chi_K\|_p<\epsilon$ and $\|T_{a,\omega}^{n_k}(g_k)-f\|_p<\epsilon$.
Now suppose that $E_k\subseteq \sigma(f)$, with $\lim_{k\rightarrow
\infty}\underset{x\in E_{k}}{ess\,sup}\,
 \tilde{\omega}_{n_k}(x)=0$.  Since $a$ passes through  each compact subset,
  there exists an $m\in\mathbb{N}$ such that $\sigma(f) \cap
\sigma(f)a^{\pm n}=\emptyset$ for all $n>m$.
 Moreover, for each $\epsilon>0$, our hypotheses  imply that
$\lambda(\sigma(f)-E_k)<\frac{\epsilon} {2^p \Vert
f\Vert_{\infty}^{p}}$,
$ess\,sup\,\tilde{\omega}_{n_k}<\frac{\epsilon}{\|f\|_{p}^p}$ on $E_k$
and $ess\,sup\,\omega_{n_k}<\frac{\epsilon}{2^p\|f\|_{p}^p}$ on $K$
for sufficiently large $k$ and $n_k$. Now, define
$$g_k(x):=\tilde{\omega}_{n_k}(xa^{n_k})f\chi_{E_k}(xa^{n_k})+\chi_{K}(x).$$ Then, by
the change of variable formula and the aforementioned facts we have
\begin{eqnarray*}
 \|g_k-\chi_{K}\|_p^p&=&\int_G|\tilde{\omega}_{n_k}(xa^{n_k})
 f\chi_{E_k}(xa^{n_k})|^p d\lambda(x) \\
  &=&\int_G|\tilde{\omega}_{n_k}(x)f\chi_{E_k}(x)|^p d\lambda(x) \\
&=&\int_{E_k}|\tilde{\omega}_{n_k}(x)f(x)|^p d\lambda(x)\\
      &<& \epsilon.
  \end{eqnarray*}
Therefore, $\Vert g_{k}-\chi_{K}\Vert_p\to 0$ as $k\to \infty$. On
the other hand, we have
\begin{eqnarray*}
&&\Vert T_{a,\omega}^{n_k}(g_k) - f\Vert_p^p
\\&=&
\int_G \vert T_{a,\omega}^{n_k}(g_k)(x) -f(x)\vert^p d\lambda(x)
\\&=&
\int_G \vert
\omega(x)\omega(xa^{-1})\cdot\cdot\cdot\omega(xa^{-n_k+1})g_k(xa^{-n_k})
-f(x)\vert^p d\lambda(x)
\\&=&
\int_G \vert
\omega(x)\omega(xa^{-1})\cdot\cdot\cdot\omega(xa^{-n_k+1})
\tilde{\omega}_{n_k}(x)f(x)\chi_{E_k}(x)\\&& +
\omega(x)\omega(xa^{-1})\cdot\cdot\cdot\omega(a^{-n_k+1}x)\chi_K(xa^{-n_k})
-f(x)\vert^p d\lambda(x)
\\&\leq&
2^p\int_G|f\chi_{E_k}(x)-f(x)|^p d\lambda(x)
\\&&+
2^p\int_G
\vert\omega(x)\omega(a^{-1}x)\cdot\cdot\cdot\omega(xa^{-n_k+1})\chi_K(xa^{-n_k})
\vert^p d\lambda(x)
\\&\leq&
2^p\lambda\big(\sigma(f)-E_k\big)\|f\|_{\infty}^p
+
2^p\int_G \vert{\omega}_{n_k}(x)\chi_K(x) \vert^p d\lambda(x)\\&<& \epsilon.
\end{eqnarray*}
\end{proof}
\begin{thm}
Let $G$ be a locally compact group with the  right Haar measure
$\lambda$ and let $a\in G$ be a torsion element of order $\gamma$.
Then the following conditions are equivalent:
\begin{itemize}
\item[(i)] $J_{T_{a,\omega}}(0)=L^p(G)$;\\
\item[(ii)] For every compact set $F\subseteq G$ of positive measure and
every $\epsilon, \delta>0$ there exist $n\in\mathbb{N}$ and a Borel
subset $E\subseteq F$ such that $\lambda(F\setminus E)<\delta$ and
$$
\underset{x\in E}{ess\,sup}\, \omega_n(x)^{-1}<\epsilon.
$$
\end{itemize}
\end{thm}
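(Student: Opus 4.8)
The plan is to derive both implications from two bookkeeping identities together with the one special feature of torsion elements. The identities are $\tilde{\omega}_n(x)=\omega_n(xa^{-n})^{-1}$, equivalently $T_{a,\omega}^{n}g(x)=\omega_n(xa^{-n})\,g(xa^{-n})$, and the block decomposition $\omega_{kn}(x)=\prod_{j=0}^{k-1}\omega_n(xa^{jn})$; both follow by regrouping the defining finite products. The torsion hypothesis $a^{\gamma}=e$ enters only through the remark that every power $a^{m}$ lies in the finite set $\{e,a,\dots,a^{\gamma-1}\}$, so that for a compact $F$ the $a$-saturation $\widehat{F}:=\bigcup_{j=0}^{\gamma-1}Fa^{j}$ is again compact of positive measure and contains every translate $Fa^{m}$. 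All estimates are transported between these finitely many translates via the change of variable formula and the right invariance of $\lambda$.

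For (ii)$\Rightarrow$(i): since $J_{T_{a,\omega}}(0)$ is closed it suffices to place each $f\in C_c(G)$ in it, and we may assume $f\neq 0$ and write $F=\sigma(f)$, a compact set of positive measure. Given $\eps>0$ and $N\in\mathbb{N}$ we must exhibit $n>N$ and $g\in L^p(G)$ with $\|g\|_p<\eps$ and $\|T_{a,\omega}^{n}g-f\|_p<\eps$. First I would apply (ii) to $\widehat{F}$ with the fixed ratio $\frac12$ and a small parameter $\eta$, obtaining $n_0\in\mathbb{N}$ and a Borel set $E_0\subseteq\widehat{F}$ with $\lambda(\widehat{F}\setminus E_0)<\eta$ and $\omega_{n_0}(\cdot)^{-1}<\frac12$ on $E_0$. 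The block identity then forces $\omega_{kn_0}(x)^{-1}<2^{-k}$ on $E:=\{x\in\widehat{F}:xa^{jn_0}\in E_0\text{ for }0\le j<k\}$, while $\lambda(\widehat{F}\setminus E)<\gamma\eta$ because each bad condition $xa^{jn_0}\notin E_0$ carves out a right translate of $\widehat{F}\setminus E_0$ inside $\widehat{F}$. Now fix $k$ so large that $kn_0>N$ and $2^{-kp}\|f\|_p^{p}<\eps^{p}$, set $n:=kn_0$ and $E_f:=F\cap Ea^{n}$; since $Fa^{-n}\subseteq\widehat{F}$ we get $\lambda(F\setminus E_f)<\gamma\eta$, and for $x\in E_f$ we have $xa^{-n}\in E$, hence $\tilde{\omega}_n(x)=\omega_n(xa^{-n})^{-1}<2^{-k}$. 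Finally put $g:=(\tilde{\omega}_n\,f\,\chi_{E_f})\ast\delta_{a^{-n}}$; the change of variable formula yields $\|g\|_p^{p}=\int_{E_f}|\tilde{\omega}_n f|^{p}\le 2^{-kp}\|f\|_p^{p}<\eps^{p}$, while a direct computation gives $T_{a,\omega}^{n}g=f\chi_{E_f}$, so that $\|T_{a,\omega}^{n}g-f\|_p^{p}\le\|f\|_{\infty}^{p}\,\lambda(F\setminus E_f)$, which is $<\eps^{p}$ once $\eta$ is taken below $\eps^{p}/(\gamma\|f\|_{\infty}^{p})$.

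For (i)$\Rightarrow$(ii): fix a compact $F$ with $\lambda(F)>0$ and $\eps,\delta>0$. Applying $J_{T_{a,\omega}}(0)=L^p(G)$ to $\chi_{\widehat{F}}$ gives $g_k\to 0$ and $\upuparrows(n_k)$ with $T_{a,\omega}^{n_k}g_k\to\chi_{\widehat{F}}$. Passing to a subsequence along which $n_k\equiv r\pmod{\gamma}$ we have $a^{-n_k}=a^{-r}$ and $T_{a,\omega}^{n_k}g_k(x)=\omega_{n_k}(xa^{-r})\,g_k(xa^{-r})$, so the convergence forces $|g_k(xa^{-r})|\ge\frac12\,\omega_{n_k}(xa^{-r})^{-1}$ off a subset of $\widehat{F}$ of measure tending to $0$. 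Raising to the $p$-th power, integrating, and using $\int_G|g_k|^{p}\to 0$ with the right invariance of $\lambda$ gives $\int_{E_k}\omega_{n_k}(z)^{-p}\,d\lambda(z)\to 0$ for Borel sets $E_k\subseteq\widehat{F}a^{-r}$ with $\lambda(\widehat{F}a^{-r}\setminus E_k)\to 0$. Since $F\subseteq\widehat{F}a^{-r}$, intersecting with $F$ and using Markov's inequality once more produces, for all large $k$, a Borel set $E\subseteq F$ with $\lambda(F\setminus E)<\delta$ and $\underset{x\in E}{ess\,sup}\,\omega_{n_k}(x)^{-1}<\eps$; this is (ii) with $n=n_k$.

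The routine parts — the change of variable evaluations of $\|g\|_p$ and of $T_{a,\omega}^{n}g$, and the Markov/Chebyshev passages from $L^{p}$-smallness to essential-supremum smallness — I would only indicate. The one genuine subtlety, the point where the two conditions look mismatched, is that (ii) asserts the existence of a single exponent $n$ while membership in $J_{T_{a,\omega}}(0)$ demands arbitrarily large $n$. This is exactly what the block identity $\omega_{kn_0}=\prod_{j<k}\omega_{n_0}(\,\cdot\,a^{jn_0})$ resolves: because $a$ is torsion the shifted factors live on only finitely many translates of $F$, so a single good exponent $n_0$ with ratio below $\frac12$ is automatically amplified into good exponents $kn_0$ for every large $k$, and with them the $J$-class condition at all scales $n>N$.
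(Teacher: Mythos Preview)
Your argument is correct. For (i)$\Rightarrow$(ii) you and the paper do essentially the same thing: approximate $\chi_{\widehat{F}}$ from $J_{T_{a,\omega}}(0)$, use the $a$-invariance of $\widehat{F}$ together with a change of variable to land on $F$ itself, and apply Markov/Chebyshev twice to isolate a large set $E\subseteq F$ on which $\omega_n^{-1}$ is small. The passage to a fixed residue class $n_k\equiv r\pmod\gamma$ is a cosmetic difference; the paper avoids it by changing variables first.

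For (ii)$\Rightarrow$(i) your route is genuinely different. The paper introduces the right inverse $S_{a,\omega}(h)(x)=h(xa)/\omega(xa)$, observes from (ii) that the $\gamma$-periodic product $\omega_\gamma(x)=\prod_{i=0}^{\gamma-1}\omega(xa^i)$ must exceed $1$ on the good set, and then pushes $S_{a,\omega}^{\gamma n}(g\chi_E)$ to zero while $T_{a,\omega}^{\gamma n}S_{a,\omega}^{\gamma n}(g\chi_E)=g\chi_E$ stays near $g$; a separate power-boundedness argument rules out $\omega_\gamma\le 1$. You instead exploit the block identity $\omega_{kn_0}(x)=\prod_{j=0}^{k-1}\omega_{n_0}(xa^{jn_0})$ directly: a single application of (ii) with ratio $\tfrac12$ is amplified to ratio $2^{-k}$ on the intersection set $E$, and the torsion hypothesis enters precisely in the observation that the shifts $a^{jn_0}$ assume at most $\gamma$ distinct values, so $\lambda(\widehat{F}\setminus E)<\gamma\eta$ uniformly in $k$. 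This buys you arbitrarily large exponents $n=kn_0$ from one use of (ii), sidesteps the right-inverse machinery and the $\omega_\gamma\lessgtr 1$ dichotomy, and makes the role of torsion completely transparent.
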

\begin{proof}
To prove the implication $\mbox{(i)}\Rightarrow\mbox{(ii)}$, suppose
that  $J_{T_{a,\omega}}(0)=L^p(G)$. Take $\epsilon, \delta>0$ and a
compact subset $F\subseteq G$ arbitrarily. Without loss of
generality, assume that $\lambda(F)>3\delta^{p}$ and
$\frac{\delta}{1-\delta}<\epsilon$. Hence, we may find $g\in L^p(G)$
and $n\in\mathbb{N}$ large enough satisfying $\|g\|_p<\delta^2$ and
$$
\|T^n_{a,\omega}g-\chi_{\bigcup_{k=0}^{\gamma-1}Fa^k}\|_p<\delta^2.
$$
Set $A=\{x\in F: |g(x)|\geq\delta\}$. In the case $A$ is a non-empty
subset, it is inferred that
$$
\delta^{2p}>\|g\|_{p}^p\geq\int_A |g(x)|^p
d\lambda(x)\geq\delta^p\lambda(A),
$$
that is $\lambda(A)<\delta^p$. Also, set
$$
B=\left\{x\in F:\,\left|\omega_n(x)g(x)-1\right|\geq\delta\right\}.
$$
Now, we may write $n=\gamma[\frac{n}{\gamma}]+k$ where $0\leq
k\leq\gamma-1$. If $\lambda(B)\neq 0$ then by the  change of
variable formula  we obtain that
\begin{eqnarray*}
\delta^{2p} &>&
\|T^{n}_{a,\omega}g-\chi_{\bigcup_{k=0}^{\gamma-1}Fa^k}\|_{p}^p\\
&=& \int_G |\omega(x)\omega(xa^{-1})\cdots
\omega(xa^{-(n-1)})g(xa^{-n})-\chi_{\bigcup_{k=0}^{\gamma-1}Fa^k}(x)|^p d\lambda(x)\\
&=& \int_G |\omega(xa^{n})\omega(xa^{n-1})\cdots
\omega(x a)g(x)-\chi_{\bigcup_{k=0}^{\gamma-1}Fa^k}(xa^n)|^p d\lambda(x)\\
&\geq&
\int_B |\omega_n(x)g(x)-1|^p d\lambda(x)\\
&\geq& \delta^p\lambda(B).
\end{eqnarray*}
Therefore, $\lambda(B)\leq\delta^p$ and subsequently
$\lambda\big(F\setminus(A\cup B)\big)>\delta^p$. Finally, by setting
$E:=F\setminus(A\cup B)$ we obtain $\lambda(F\setminus E)<2\delta^p$
and in turn for each $x\in E$ we have
$$
\frac{1}{\omega_n(x)}<\frac{|g(x)|}
{1-\delta}<\frac{\delta}{1-\delta}<\varepsilon.
$$

To  prove the converse implication
$\mbox{(ii)}\Rightarrow\mbox{(i)}$, the initial definition of
$J_{T_{a,\omega}}(0)$ does not use here any longer. Define the
operator $S_{a,\omega}:L^p(G)\rightarrow L^p(G)$ by
$S_{a,\omega}(h)(x)=\frac{h(xa)}{\omega(xa)}$ for each $x\in G$ and
$h\in L^p(G)$. Take open neighborhood $U$ of the origin and a
non-empty open neighborhood $V$ in $L^p(G)$  arbitrarily. As the set
$C_c(G)$ is dense in $L^p(G)$,  there is a $g\in C_c(G)\cap V$ and
hereon let $F=\sigma(g)$. Since $\omega$ is continuous, then for
some $\eta>0$, $\inf\{\prod_{i=1}^{k}\omega(xa^{i})^{-1}: 1\leq
k\leq\gamma-1\}>\eta$ for all $x\in F$. Now by our assumptions in
$\mbox{(ii)}$, for each $\epsilon,\delta>0$, there exist $E\subseteq
F$ and a positive integer $n$ satisfying $\lambda(F\setminus
E)<\delta$ and $\underset{x\in
E}{ess\,sup}\,\big(\prod_{i=1}^{n}\omega(xa^{i})\big)^{-1}<\epsilon$.
In particular, if we consider
$\omega_{\gamma}(x)=\omega(x)\omega(xa)\cdots\omega(xa^{\gamma-1})$,
then apparently $\underset{E}{ess\,sup}\,\omega_{\gamma}>1$. We may
also write  $n=\gamma[\frac{n}{\gamma}]+k$ where $0\leq
k\leq\gamma-1$, which implies  that
\begin{eqnarray*}
\|S_{a,\omega}^{\gamma n}(g\chi_{E})\|_{p}^{p} &=&
\int_G\left|\frac{g(xa^{\gamma n})\chi_{E}(xa^{\gamma n})}{\omega(x a)\cdots\omega(xa^{\gamma n})}\right|^p d\lambda(x)\\
&=&
\int_{E}\left|\frac{g(x)}{\omega(x a)\cdots\omega(xa^{\gamma n})}\right|^p d\lambda(x)\\
&\leq&
\underset{}{ess\,sup}\,\omega_{\gamma}^{-n}\,\underset{}{ess\,sup}\,
g\,\lambda(E).
\end{eqnarray*}
Moreover,
$$
\|T_{a,\omega}^{\gamma n}\big(S_{a,\omega}^{\gamma
n}(g\chi_{E})\big)-g\|_p= \|g\chi_{E}-g\|_p\leq
ess\,sup\,g\lambda(F\setminus E)<\delta \,{ess\,sup}\,g.
$$
The last ensures  that $T_{a,\omega}^{\gamma n}(U)\cap
V\neq\emptyset$. On the other hand, in the case
${ess\,sup}\,\omega_{\gamma} \leq 1$, it should be noted that
$$
\|T^{n}_{a,\omega}g\|_{p}^{p}\leq
ess\,sup\,\omega_{\gamma}^{p[\frac{n}{\gamma}]}\int_G
\omega^p(ax)\cdots\omega^p(a^{k}x)|g(x)|^p d\lambda(x)\leq
ess\,sup\,\omega_{\gamma}^{p([\frac{n}{\gamma}]+k)}\|g\|_p^p.
$$
Hence, $\{T^{n}_{a,\omega}g\}_{n=1}^{\infty}$ is bounded for every
$n\in\mathbb{N}$ and $g\in L^p(G)$. Actually, in this case,
$T_{a,\omega}$ is power bounded and then by \cite[Proposition
2.10]{man}, $J_{T_{a,\omega}}(0)=L_{T_{a,\omega}}(0)\neq L^p(G)$.
\end{proof}
\begin{cor}
Let $a$ be a torsion element of a locally compact group $G$. Then
$0\in J_{T_{a,\omega}}(0)^\circ$ if and only if
$J_{T_{a,\omega}}(0)=L^p(G)$.
\end{cor}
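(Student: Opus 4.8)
The forward implication needs no work: if $J_{T_{a,\omega}}(0)=L^p(G)$, then $J_{T_{a,\omega}}(0)$ is (trivially) open, so every one of its points, $0$ included, lies in its interior.

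For the converse the plan is to exploit the scale-invariance of the extended limit set at the origin, which makes $J_{T_{a,\omega}}(0)$ a cone; an interior point then forces it to fill the whole space. First I would record the elementary observation that $\Complex\cdot J_{T_{a,\omega}}(0)\subseteq J_{T_{a,\omega}}(0)$. Indeed, if $g\in J_{T_{a,\omega}}(0)$ is witnessed by a strictly increasing sequence $(n_k)\subseteq\mathbb{N}$ and a sequence $(g_n)\subseteq L^p(G)$ with $g_n\to 0$ and $T_{a,\omega}^{n_k}g_n\to g$, then for any scalar $\lambda$ the sequence $(\lambda g_n)$ still tends to $0$, while by linearity and continuity of the iterate $T_{a,\omega}^{n_k}$ we get $T_{a,\omega}^{n_k}(\lambda g_n)=\lambda\,T_{a,\omega}^{n_k}(g_n)\to\lambda g$; hence $\lambda g\in J_{T_{a,\omega}}(0)$. (The case $\lambda=0$ is covered as well, since taking $g_n\equiv 0$ shows $0\in J_{T_{a,\omega}}(0)$ in any case.)

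Granting this, suppose $0\in J_{T_{a,\omega}}(0)^\circ$ and fix $r>0$ with $\{h\in L^p(G):\|h\|_p<r\}\subseteq J_{T_{a,\omega}}(0)$. Let $f\in L^p(G)$ be arbitrary; if $f=0$ then $f\in J_{T_{a,\omega}}(0)$ already. If $f\neq 0$, put $h:=\frac{r}{2\|f\|_p}\,f$, so that $\|h\|_p=\frac{r}{2}<r$ and therefore $h\in J_{T_{a,\omega}}(0)$; applying the cone property with the scalar $\lambda=\frac{2\|f\|_p}{r}$ yields $f=\lambda h\in J_{T_{a,\omega}}(0)$. Since $f$ was arbitrary, $J_{T_{a,\omega}}(0)=L^p(G)$, which completes the proof.

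I do not anticipate a genuine obstacle here: the argument is soft and, in fact, does not even use that $a$ is a torsion element — the only input is the dilation-invariance of $J_T(0)$, which holds for any bounded linear operator $T$ on a Banach space. The one point to keep an eye on is the identity $T_{a,\omega}^{n_k}(\lambda g_n)=\lambda\,T_{a,\omega}^{n_k}(g_n)$, which is just linearity of the iterates combined with continuity, so that limits are preserved under the dilation of the witnessing sequences.
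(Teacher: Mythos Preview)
Your argument is correct and, in fact, cleaner and more general than the route the paper takes. The paper does not use the cone property of $J_T(0)$ directly; instead it leverages the preceding characterization theorem for torsion elements: given $B_r(0)\subseteq J_{T_{a,\omega}}(0)$ and a compact set $F$, it rescales the target $\chi_{\cup_{k=0}^{\gamma-1}Fa^k}$ by a small $2\delta$ so that it lands in $B_r(0)$, extracts the approximating data $(g,n)$, and then reruns the estimates from the proof of the implication $\mbox{(i)}\Rightarrow\mbox{(ii)}$ of that theorem to verify the weight condition $\underset{x\in E}{\mathrm{ess\,sup}}\,\omega_n(x)^{-1}<\epsilon$; the implication $\mbox{(ii)}\Rightarrow\mbox{(i)}$ then yields $J_{T_{a,\omega}}(0)=L^p(G)$. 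Your approach bypasses the weight condition entirely by observing that $J_T(0)$ is invariant under scalar multiplication, so an open ball about $0$ dilates to all of $L^p(G)$. This is softer and, as you note, valid for any bounded linear operator on any Banach space, so the torsion hypothesis is irrelevant for this corollary. The paper's detour, on the other hand, stays within its concrete framework and makes explicit that the weight condition $\mbox{(ii)}$ is already forced by the weaker interior hypothesis. One cosmetic point: your indexing $T_{a,\omega}^{n_k}g_n$ mixes the subscripts $k$ and $n$; writing $(g_k)$ with $T_{a,\omega}^{n_k}g_k\to g$ would be tidier, though the paper's own definition has the same slip.
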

\begin{proof}
Suppose that  $0\in J_{T_{a,\omega}}(0)^\circ$. Then, there is a
positive real number $r$ such that $B_{r}(0)\in
J_{T_{a,\omega}}(0)$. Hence we can choose $\delta>0$ sufficiently
small with $2\delta\chi_{\cup_{k=0}^{\gamma-1}Fa^k}\in B_{r}(0)$. It
follows that there exist $g\in L^p(G)$ and $n\in\mathbb{N}$ such
that $\|g\|_p<\delta^3$ and
$\|T^n_{a,\omega}g-2\delta\chi_{\cup_{k=0}^{\gamma-1}Fa^k}\|_p<\delta^2$.
Therefore, similar to the proof of the preceding theorem the rest is
done.
\end{proof}
\begin{thm}\label{t3}
Let $G$ be a  locally compact group. Assume that  an element $a\in
G$ passes through  each compact subsets.  Then the followings are
equivalent:
\begin{itemize}
  \item [(i)] $J_{T_{a,\omega}}(0)=L^p(G)$,
  \item [(ii)] for each compact subset $\Delta
\subset G$ with the positive measure,
  there is a sequence of Borel subsets $(E_k)$ such
that $\lambda(E_k)\to\lambda(\Delta)$ and moreover for some
subsequence $(n_k)$,
$$
\lim_{k\rightarrow \infty}\underset{x\in E_{k}}{ess\,sup}\,
\tilde{\omega}_{n_k}(x)=0.
$$
\end{itemize}
\end{thm}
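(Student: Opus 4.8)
The plan is to establish the two implications separately, reading each off from an argument already present in the paper. The implication $\mbox{(i)}\Rightarrow\mbox{(ii)}$ is a lighter version of the proof of Theorem~\ref{t2.2}: since the relevant $J$-vector is now $0$ rather than a nonzero function, the element $g$ approximating a preimage is small \emph{in norm}, and the measure estimates simplify accordingly. The implication $\mbox{(ii)}\Rightarrow\mbox{(i)}$ is the construction of Theorem~\ref{t4} with the summand $\chi_K$ suppressed; this is precisely why hypothesis (ii) of Theorem~\ref{t4} is not needed in the present characterization.

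For $\mbox{(i)}\Rightarrow\mbox{(ii)}$ I would fix a compact $\Delta\subseteq G$ with $\lambda(\Delta)>0$ and a small $\delta\in(0,1)$. Since $\chi_{\Delta}\in L^p(G)=J_{T_{a,\omega}}(0)$, the neighborhood description of the extended limit set produces, for any prescribed threshold $N$, an integer $n>N$ and $g\in L^p(G)$ with $\|g\|_p<\delta^2$ and $\|T_{a,\omega}^{n}(g)-\chi_{\Delta}\|_p<\delta^2$. Setting
$$
B=\set{x\in\Delta:\ \abs{\tilde{\omega}_n(x)^{-1}g(xa^{-n})-1}\geq\delta},\qquad C=\set{x\in\Delta:\ \abs{g(xa^{-n})}\geq\delta},
$$
one gets $\lambda(B)<\delta^p$ straight from $\|T_{a,\omega}^{n}(g)-\chi_{\Delta}\|_p<\delta^2$, and $\lambda(C)<\delta^p$ from $\|g\|_p<\delta^2$ after applying the change of variable formula to $|g|^p\ast\delta_{a^n}$ on $C$. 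Then $E:=\Delta\setminus(B\cup C)$ satisfies $\lambda(\Delta\setminus E)<2\delta^p$, and for every $x\in E$,
$$
\tilde{\omega}_n(x)<\frac{\abs{g(xa^{-n})}}{1-\delta}<\frac{\delta}{1-\delta}.
$$
Running $\delta=\delta_k\downarrow 0$ and choosing the integers $n=n_k$ strictly increasing — legitimate because at each stage any threshold $N$ is permitted — yields the sequences $(E_k)$ and $(n_k)$ demanded in (ii).

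For $\mbox{(ii)}\Rightarrow\mbox{(i)}$ I would use that $J_{T_{a,\omega}}(0)$ is closed and $C_c(G)$ is dense in $L^p(G)$, so it suffices to show $f\in J_{T_{a,\omega}}(0)$ for an arbitrary $f\in C_c(G)$ (the case $f=0$ being trivial). Apply (ii) with $\Delta=\sigma(f)$ to obtain Borel sets $E_k\subseteq\sigma(f)$ with $\lambda(E_k)\to\lambda(\sigma(f))$ and a subsequence $(n_k)$ with $\underset{E_k}{ess\,sup}\,\tilde{\omega}_{n_k}\to 0$, and define
$$
g_k(x):=\tilde{\omega}_{n_k}(xa^{n_k})\,(f\chi_{E_k})(xa^{n_k}).
$$
The change of variable formula gives $\|g_k\|_p^p=\int_{E_k}\abs{\tilde{\omega}_{n_k}(x)f(x)}^p\,d\lambda(x)\leq\big(\underset{E_k}{ess\,sup}\,\tilde{\omega}_{n_k}\big)^p\|f\|_p^p\to 0$, while $T_{a,\omega}^{n_k}(g_k)(x)=\tilde{\omega}_{n_k}(x)^{-1}g_k(xa^{-n_k})=f\chi_{E_k}(x)$ identically, so $\|T_{a,\omega}^{n_k}(g_k)-f\|_p^p=\int_{\sigma(f)\setminus E_k}\abs{f}^p\,d\lambda\leq\|f\|_{\infty}^p\big(\lambda(\sigma(f))-\lambda(E_k)\big)\to 0$. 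Hence $g_k\to 0$ and $T_{a,\omega}^{n_k}(g_k)\to f$, that is $f\in J_{T_{a,\omega}}(0)$.

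There is no serious analytic difficulty here; the points that require care are organizational. First, one must extract from the single hypothesis $J_{T_{a,\omega}}(0)=L^p(G)$ one pair of sequences $(g_k)$, $(n_k)$ with $n_k$ strictly increasing, which is handled inductively through the neighborhood characterization of $J_{T_{a,\omega}}(0)$; and one must pass from the target space $L^p(G)$ down to the dense subset $C_c(G)$, which is where closedness of $J_{T_{a,\omega}}(0)$ enters. Second, in contrast with Theorems~\ref{t2.2} and~\ref{t4}, no disjointness of translates such as $\sigma(f)\cap\sigma(f)a^{\pm n}=\emptyset$ plays a role in the construction: when the $J$-vector is $0$ the reconstruction $T_{a,\omega}^{n_k}(g_k)=f\chi_{E_k}$ is exact and there is no competing $\chi_K$-term to separate from $\sigma(f)$, which is exactly why condition (ii) of Theorem~\ref{t4} disappears from the statement.
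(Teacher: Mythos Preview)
Your proof is correct and follows essentially the same route as the paper: the same exceptional sets $B$, $C$ (the paper calls them $C_{\eta,k}$ and $B_{\eta,k}a^{n_k}$) in the forward direction, and the identical test functions $g_k(x)=\tilde{\omega}_{n_k}(xa^{n_k})(f\chi_{E_k})(xa^{n_k})$ in the backward one. One point worth flagging: the paper's $\mbox{(i)}\Rightarrow\mbox{(ii)}$ explicitly invokes the disjointness $\Delta\cap\Delta a^{\pm n}=\emptyset$ coming from the hypothesis on $a$, but then never uses it; your observation that the argument goes through without any appeal to this condition is correct and slightly sharper than the paper's presentation.
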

\begin{proof}
$\mbox{(i)}\Rightarrow \mbox{(ii)}$. Suppose that $J_{T_{a,\omega}}(0)=L^p(G)$.
Hence, for each $\epsilon>0$ and a compact subset $\Delta\subset G$ with positive measure,
we may find a sequence $(g_k)$ in $L^p(G)$ and a strictly increasing sequence of
positive integers $(n_k)$ such that $$\|g_k \|_p <\epsilon ~ \text{
and}~  \|T_{a,\omega}^{n_k}(g_k)-\chi_\Delta\|_p<\epsilon.$$
By the assumption
there exists an $N\in\mathbb{N}$ such that $\Delta \cap \Delta
a^{\pm n}=\emptyset$ for all $n>N$. Without loss of generality, we
may assume that $n_k>N$ for each $k\in \mathbb{N}$.
 Fix  $ \eta> 0$, such that $\frac{\eta}{1-\eta}<\epsilon$
and set $B_{\eta,k}=\{x\in G: |g_k(x)|\geq\eta\}$.
Hence, $0\leq\lambda(B_{\eta,k})<\eta^p$. On the other hand, assume that
$$C_{\eta,k}=\{x\in\Delta:|\tilde{\omega}_{n_k}(x)^{-1}g(xa^{-n_k})-1|\geq\eta\}.$$
In the case $C_{\eta,k}\neq \emptyset$, by the
change of variable formula  we obtain $0\leq\lambda(C_{\eta,k})<\eta^p$.
Finally by setting $E_k:=\Delta\setminus \big(C_{\eta,k}\cup
B_{\eta,k}a^{n_k}\big)$, the desired result is established. Indeed,
we have $\lambda(\Delta\setminus E_k)<\eta^p$ and
$$
\tilde{\omega}_{n_k}(x)<\frac{|g(xa^{-n_k})|}{1-\eta}
<\frac{\eta}{1-\eta}<\epsilon.
$$

$\mbox{(ii)}\Rightarrow \mbox{(i)}$. We shall prove
that $J_{T_{a,\omega}}(0)=L^p(G)$. Indeed, for each $\epsilon>0$ and
$f\in L^p(G)$, we are going to find a sequence
 $(g_k)$ in $L^p(G)$ and a strictly increasing sequence of positive integers
  $(n_k)$  such that $\|g_k
\|_p<\epsilon$ and
$\|T_{a,\omega}^{n_k}(g_k)-f\|_p<\epsilon$.
 Note that
$\overline{C_c(G)}=L^p(G)$ and so  we may prove the last assertion
for each $f \in C_c(G)$.
 It is known that
$\sigma(f)$ is a non-empty and compact subset, hence by the
assumptions, for each $\epsilon>0$,  there exist a Borel subset
$E_k\subseteq \sigma(f)$  such that
$\lambda(\sigma(f)-E_k)<\frac{\epsilon} { \Vert f\Vert_{\infty}^{p}}$ and
 $\underset{x\in E_{k}}{ess\,sup}\,
\tilde{\omega}_{n_k}(x)<\frac{\epsilon}{\Vert f\Vert_p^p}$,
 for enough large positive integers $k$ and $n_k$.
Define
$$g_k(x):=\tilde{\omega}_{n_k}(xa^{n_k})f\chi_{E_k}(xa^{n_k}).$$ Then,   by the
change of variable formula and the aforementioned facts we obtain
\begin{eqnarray*}
 \|g_k\|_p^p&=&\int_G|\tilde{\omega}_{n_k}(xa^{n_k})
 f\chi_{E_k}(xa^{n_k})|^p d\lambda(x) \\
  &=&\int_G|\tilde{\omega}_{n_k}(x)f(x)\chi_{E_k}(x)|^p d\lambda(x) \\
&=& \int_{E_k}|\tilde{\omega}_{n_k}(x)f(x)|^p d\lambda(x)\\
      &<& \epsilon.
  \end{eqnarray*}
Therefore, $\Vert g_{k}\Vert_p\to 0$ as $k\to \infty$. On the other
hand, we have
\begin{eqnarray*}
&&\Vert T_{a,\omega}^{n_k}(g_k) - f\Vert_p^p
\\&=&
\int_G \vert T_{a,\omega}^{n_k}(g_k)(x) -f(x)\vert^p d\lambda(x)
\\&=&
\int_{G} \vert
\omega(x)\omega(xa^{-1})\cdot\cdot\cdot\omega(xa^{-n_k+1})\tilde{\omega}_{n_k}(x)
f\chi_{E_k}(x)-f(x)\vert^p d\lambda(x)\\
&=&\int_{G} \vert f\chi_{E_k}(x)-f(x)\vert^p d\lambda(x)\\ &\leq&
\lambda\big(\sigma(f)-E_k\big) \|f\|_{\infty}^p\\&<&\epsilon.
\end{eqnarray*}
\end{proof}
\begin{rem}
Note that in the implication $(ii)\Rightarrow (i)$ of Theorem
\ref{t3}, apparently it has not been assumed that a group  $G$ is to
be second countable. As a result, $J_{T_{a,\omega}}(0)$ can fill the
non-separable $L^p(G)$ spaces while it is impossible for the
hypercyclic phenomena.
\end{rem}
\begin{thm}
Let $G$ be a locally compact group with a fixed right Haar measure
$\lambda$. Assume that $a\in G$ passes through each compact subsets
of $G$. Then the followings are equivalent:
\begin{itemize}
\item[(i)] $0\in J_{T_{a,\omega}}(0)^\circ$;\\
\item[(ii)] $J_{T_{a,\omega}}(0)=L^p(G)$.
\end{itemize}
\end{thm}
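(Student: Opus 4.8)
The plan is to deduce condition $\mbox{(ii)}$ of Theorem \ref{t3} from hypothesis $\mbox{(i)}$ of the present statement and then quote Theorem \ref{t3}. The implication $\mbox{(ii)}\Rightarrow\mbox{(i)}$ is immediate, since $J_{T_{a,\omega}}(0)=L^p(G)$ is itself open and contains $0$. For $\mbox{(i)}\Rightarrow\mbox{(ii)}$ I would run the same scheme as in the $\mbox{(i)}\Rightarrow\mbox{(ii)}$ part of Theorem \ref{t3} and of the earlier Corollary for torsion elements: from $0\in J_{T_{a,\omega}}(0)^\circ$ pick $r>0$ with $B_r(0)\subseteq J_{T_{a,\omega}}(0)$, and for an arbitrary compact $\Delta\subseteq G$ with $\lambda(\Delta)>0$ put $c:=\frac{r}{2}\lambda(\Delta)^{-1/p}$, so that the \emph{nonzero} function $c\chi_\Delta$ has $\|c\chi_\Delta\|_p=\frac{r}{2}<r$ and hence belongs to $B_r(0)\subseteq J_{T_{a,\omega}}(0)$.

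Since $a$ passes through compact subsets, fix $N\in\mathbb{N}$ with $\Delta\cap\Delta a^{\pm m}=\emptyset$ for all $m>N$. Applying the neighbourhood formulation of membership in $J_{T_{a,\omega}}(0)$ to the vector $c\chi_\Delta$, I would obtain for each $k\in\mathbb{N}$ a function $g_k\in L^p(G)$ and an integer $n_k$ with $n_k>\max\{N,n_{k-1}\}$ such that $\|g_k\|_p<k^{-2}$ and $\|T_{a,\omega}^{n_k}g_k-c\chi_\Delta\|_p<k^{-2}$; the ``$\exists\,n>N$'' clause in the definition guarantees that the $n_k$ may be taken strictly increasing and $>N$. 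Setting $A_k=\{x\in G:|g_k(x)|\geq k^{-1}\}$ and $B_k=\{x\in\Delta:|\tilde{\omega}_{n_k}(x)^{-1}g_k(xa^{-n_k})-c|\geq k^{-1}\}$, the change of variable formula and the elementary estimates already used in the previous proofs give $\lambda(A_k)<k^{-p}$ and $\lambda(B_k)<k^{-p}$. Then $E_k:=\Delta\setminus\big(B_k\cup A_ka^{n_k}\big)$ satisfies $\lambda(\Delta\setminus E_k)<2k^{-p}$, so $\lambda(E_k)\to\lambda(\Delta)$ by right-invariance of $\lambda$; and for $x\in E_k$ with $k$ large enough that $k^{-1}<c$ one gets $0<|g_k(xa^{-n_k})|<k^{-1}$ together with $\tilde{\omega}_{n_k}(x)^{-1}|g_k(xa^{-n_k})|>c-k^{-1}$, whence $\tilde{\omega}_{n_k}(x)<|g_k(xa^{-n_k})|\,(c-k^{-1})^{-1}<(ck-1)^{-1}$. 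Thus $\underset{x\in E_k}{ess\,sup}\,\tilde{\omega}_{n_k}(x)\to 0$, which is exactly condition $\mbox{(ii)}$ of Theorem \ref{t3}, and that theorem yields $J_{T_{a,\omega}}(0)=L^p(G)$.

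I do not anticipate a genuine obstacle, as every estimate mirrors those in Theorems \ref{t2.2} and \ref{t3}; the one point to watch is the bookkeeping around the \emph{fixed} positive constant $c$, which must not be allowed to shrink with $k$ --- otherwise the bound $\tilde{\omega}_{n_k}(x)<(ck-1)^{-1}$ would not tend to $0$ --- while still forcing $n_k\to\infty$ through the definition of $J_{T_{a,\omega}}(0)$. I note in passing that the equivalence also admits a short abstract proof: $J_{T_{a,\omega}}(0)$ is closed and, because $T_{a,\omega}$ is linear and $x_n\to0$ implies $\alpha x_n\to0$, invariant under every scalar multiple, i.e.\ a closed cone; a closed cone having $0$ as an interior point must coincide with the whole space. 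I would nonetheless present the route through Theorem \ref{t3} to keep the section self-contained and consistent with the treatment of the torsion case.
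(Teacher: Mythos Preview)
Your argument is correct and follows essentially the same route as the paper: pick a scaled indicator $c\chi_\Delta$ inside the open ball $B_r(0)\subseteq J_{T_{a,\omega}}(0)$, use Chebyshev-type measure estimates on the sets where $g_k$ or $T_{a,\omega}^{n_k}g_k$ deviate, and deduce the weight condition (ii) of Theorem~\ref{t3}. The only cosmetic difference is that you keep the target $c\chi_\Delta$ fixed and let the error $k^{-2}$ shrink, whereas the paper scales the target itself with $\epsilon$; your bookkeeping is arguably cleaner. Your parenthetical ``closed cone'' argument (that $J_{T_{a,\omega}}(0)$ is closed and invariant under all scalar multiples, hence equals the whole space once it has $0$ as an interior point) is valid, considerably shorter, and does not appear in the paper.
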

\begin{proof}
It is adequate to prove $\mbox{(i)}\Rightarrow\mbox{(ii)}$, as the
converse is obvious. Take $r>0$ such that the open ball $B_{r}(0)$
is contained in $J_{T_{a,\omega}}(0)$. Pick an arbitrary compact
subset $K\subset G$ with $\lambda(K)>0$ and choose $\epsilon>0$ with
$\epsilon<r$ and $\lambda(K)>3\epsilon^p$. Hence
$\displaystyle\frac{\epsilon}{2}\lambda(K)^{-\frac{1}{p}}\chi_{K}\in
B_{\epsilon}(0)$. It follows that there exist $g\in L^p(G)$ and
$n\in\mathbb{N}$ large enough, so that $K\cap Ka^{-n}=\emptyset$,
$$
\|g\|_p<\frac{\epsilon^3}{4\lambda(K)^\frac{1}{p}} \qquad \mbox{and}
\qquad
\|T_{a,\omega}^n(g)-\frac{\epsilon}{2\lambda(K)^\frac{1}{p}}\chi_{K}\|_p<
\frac{\epsilon^2}{4\lambda(K)^\frac{1}{p}}.
$$
By defining $C=\{x\in G\setminus
K:\,|g(x)|>\frac{\epsilon^2}{4\lambda(K)^\frac{1}{p}}\}$ we get that
$$
\frac{\epsilon^{3p}}{4^p\lambda(K)}>\int_G|g(x)|^p\,d\lambda(x)\geq
\frac{\epsilon^{2p}}{4^p\lambda(K)}\lambda(C).
$$
Therefore, in the case $\lambda(C)\neq 0$, it is inferred that
$0<\lambda(C)<\epsilon^p$. Also, put
$$
D=\left\{x\in K:\left|\prod_{i=0}^{n-1}
\omega(xa^{-i})g(xa^{-n})-\frac{\epsilon}{2\lambda(K)^{\frac{1}{p}}}\right|>
\frac{\epsilon}{4\lambda(K)^{\frac{1}{p}}}\right\}.
$$
It implies that if $\lambda(D)\neq 0$, then
$$
\frac{\epsilon^{2p}}{4^p\lambda(K)}>
\int_G|T_{a,\omega}^{n}g(x)-\frac{\epsilon}{2\lambda(K)^\frac{1}{p}}\chi_{K}(x)|^p
d\lambda(x)\geq \frac{\epsilon^{p}}{4^p\lambda(K)}\lambda(D),
$$
whence $0<\lambda(D)<\epsilon^p$. Now for each $x\in
K\setminus(D\cup Ca^{n})$ we obtain that
$$
\frac{1}{\prod_{i=0}^{n-1}\omega(xa^{-i})}<\frac{4\lambda(K)^{\frac{1}{p}}}
{\epsilon}|g(xa^{-n})|<\epsilon.
$$
\end{proof}

\begin{exam}
Let $G=(\mathbb{R}, +)$, be the group of the real numbers equipped
with  Lebesgue measure.  Define the weight function $w$ on
$\mathbb{R}$ by
$$w(x)=\left\{
  \begin{array}{ll}
   \alpha, & {1\leq x}, \\
        -\frac{x}{2}+1, & {-1< x < 1},\\
    \beta, &{x\leq -1} ,
  \end{array}
\right.$$  where the real numbers $\alpha,\beta$ are arbitrarily
chosen such that $1<\alpha <\beta$. Fix a  nonzero number $a\in
\mathbb{R}$ and consider the convolution
$(\omega*\delta_{a})(x)=\omega(x- a)$, for each $x\in \mathbb{R}$
and $f\in L^p(\mathbb{R})$.  The statement of Theorem \ref{t3}
ensures that $J_{T_{a,\omega}}(0)=L^p(\mathbb{R})$ while is not
hypercyclic by \cite[Theorem 2.2]{chu}.
 \end{exam}
\begin{exam}
Let's assume that $ G = (\mathbb{R}^+,\times) $ and $ a =\frac{1}{2}
$ with the weight function $ \omega(x) = \exp(x)$. We will show that
$J_{T_{a,\omega}}(0)=L^p(\mathbb{R})$. For this, consider that
 \begin{eqnarray*}
\prod_{i=0}^{i=m-1}\omega(xa^{-i})&=&\prod_{i=0}^{i=m-1}\omega(2^{i}x)
 =\prod_{i=0}^{i=m-1}\exp(2^{i}x)=\prod_{i=0}^{i=m-1}\exp(2^{i}x) \\
&=&\exp(x\sum_{i=0}^{i=m-1}2^{i})\\
\end{eqnarray*}
Note that by choosing a suitable sequence  $(E_k)$ and $(n_k)$, we
easily understand that   $$ \lim_{k\rightarrow \infty}\underset{x\in
E_{k}}{ess\,sup}\, \tilde{\omega}_{n_k}(x)=0.
$$ Ultimately,
$J_{T_{a,\omega}}(0)=L^p(\mathbb{R})$ by Theorem \ref{t3}. But
$T_{a,\omega}$ is not hypercyclic by \cite[Theorem 2.2]{chu}.
\end{exam}
\begin{exam}
Let $G=(\mathbb{R}, +)$, be the group of the real numbers equipped
with  Lebesgue measure.  Define the weight function $w$ on
$\mathbb{R}$ by
$$\omega(x)=\left\{
  \begin{array}{ll}
   2, & {x\leq -1}, \\
        -\frac{7}{4}x+\frac{1}{4}, & {-1< x < 0},\\
    \frac{7}{4}x+\frac{1}{4}, &{0<x\leq 1} ,\\
-\frac{7}{4}(x-1-m)+\frac{1}{4}, & {m< x \leq m+1 (m=1,3,5,...)},\\
\frac{7}{4}(x-m)+\frac{1}{4}, & {m< x \leq m+1 (m=2,4,6,...)}.
  \end{array}
\right.$$ Indeed, for each $x>2$ we adopt that
$\omega(x)=\omega(x-2)$. See the following figure:

\begin{figure}[h]
  \centering
 \includegraphics[width=0.5\textwidth]{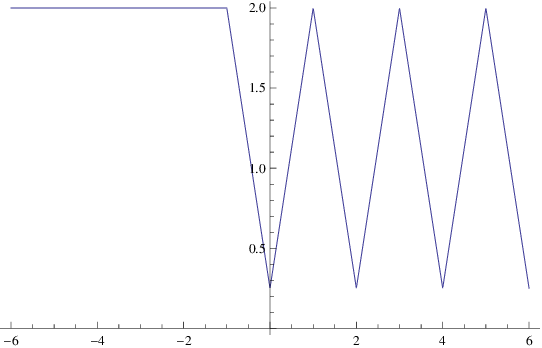}
  \caption{Weight function}
  \label{fig:10}
\end{figure}

 Fix a  nonzero number $a\in
\mathbb{R}$ and consider the convolution
$(\omega*\delta_{a})(x)=\omega(x- a)$, for each $x\in \mathbb{R}$
and $f\in L^p(\mathbb{R})$.   Fix $a=2$ and choose  compact subset
$\Delta\subseteq \mathbb{R}$ arbitrarily. Then for each $x\in
\Delta$ we have
$${\tilde{\omega}_{n}}(x)=\frac{1}{\prod_{i=0}^{i=n-1}\omega(xa^{-i})}=
\frac{1}{\prod_{i=0}^{i=n-1}\omega(x-2{i})} \rightarrow 0,$$ as
$n\rightarrow \infty$. Also, for each $x\in [0, \frac{1}{4}]$ we
have $\frac{1}{4}<\omega(x)<\frac{11}{16}$. Hence,
$$\omega_{n}(x)=\prod_{i=1}^{i=n}\omega(xa^{i})=
\prod_{i=1}^{i=n}\omega(x+2{i}) \rightarrow 0,$$ as $n\rightarrow
\infty$.

 Eventually, the conditions of Theorem \ref{t4} are satisfied  and  then
 guarantee that the corresponding weighted translation $T_{a,w}$ is $J$-class
on $L^p(\mathbb{R})$ but it is not hypercyclic by \cite[Theorem
2.2]{chu}.
 \end{exam}
\section{conclusions}
There are $J$-class weighted translations on locally compact groups
which are not hypercyclic. Indeed, according to Theorem \ref{t4}, we
may find many nonzero functions of $C_c(G)$ which are $J$-vectors
for non hypercyclic weighted translations on $L^p(G)$(\cite[Theorem
2.2]{chu}). Also,  $J$-class weighted translations  can occur on
non-separable $L^p(G)$ spaces whereas hypercyclic phenomena cannot.
As we have seen in the results, the extended limit sets i.e.,
$J_T(x)$ are playing powerful role rather than the orbits in the
dynamics of linear operators. For this, note that  the orbit of
weighted translation $T_{a,\omega}$ at zero cannot be whole space
$L^p(G)$ while it is possible $J_{T_{a,\omega}}(0)=L^p(G)$. Another
difference between $J$-sets and orbits of weighted translations on
locally compact groups, is in the case that  a right translated
element is torsion. In this case, unlike the case of non-dense
orbits of weighted translation operators (\cite[Lemma 1.1]{chu}), we
have $J_{T_{a,\omega}}(0)=L^p(G)$.


\end{document}